\def\es{\emptyset}
\def\eps{\varepsilon}
\def\Z{\mathbb{Z}}
\def\extr{\mathrm{ext}}
\def\cD{\mathcal{D}}
\def\cE{\mathcal{E}}
\def\cR{\mathcal{R}}
\def\Pr{\mathbf{P}}
\def\E{\mathbf{E}}
\newcommand{\eqn}[2]{\begin{equation}\label{#1}#2\end{equation}}
\newcommand{\eqnst}[1]{\begin{equation*}#1\end{equation*}}
\newcommand{\eqnsplst}[1]{\begin{equation*}\begin{split}%
    #1\end{split}\end{equation*}}
\theoremstyle{plain}
\newtheorem{theorem}{Theorem}
\newtheorem{lemma}[theorem]{Lemma}
\theoremstyle{definition}
\newtheorem{definition}{Definition}
\theoremstyle{remark}
\newtheorem{remark}{Remark}
\newtheorem{open}{Open Question}
\begin{document}

\title{Minimal configurations and sandpile measures}
\author{Antal A.~J\'arai\thanks{Department of Mathematical 
Sciences, University of Bath,
Claverton Down, Bath BA1 7AY, United Kingdom. 
E-mail: {\tt A.Jarai@bath.ac.uk}} \ and Nicol\'as Werning
\thanks{Department of Mathematics and Statistics, 
University of Reading, Whiteknights, Reading RG6 6AX, 
United Kingdom. 
E-mail: {\tt N.Werning@pgr.reading.ac.uk}}}

\maketitle

\abstract{We give a new simple construction of the 
sandpile measure on an infinite graph $G$, under the sole
assumption that each tree in the Wired Uniform Spanning
Forest on $G$ has one end almost surely. 
For, the so called, generalized 
minimal configurations the limiting probability on $G$ 
exists even without this assumption. We also give
determinantal formulas for minimal configurations
on general graphs in terms of the transfer current 
matrix.}

\medbreak

{\bf Key words:} Abelian sandpile, sandpile measure, 
minimal configuration, uniform spanning tree, 
determinantal process

\section{Introduction}
\label{sec:intro}

In this paper we study minimal configurations and associated 
determinantal formulas in Abelian sandpiles. This will lead 
to a new simple construction of sandpile measures.
Let us start by defining the Abelian sandpile model, 
deferring more detailed background to Section \ref{sec:defn}.

Let $G = (V \cup \{ s \}, E)$ be a finite connected
multigraph, with a distinguished vertex $s$,
called the ``sink''. 
A \emph{sandpile} on $G$ is a configuration of 
particles on $V$, specified by a map 
$\eta : V \to \{ 0, 1, 2, \dots \}$, where 
$\eta(x)$ is the number of particles at $x$.
If $\eta(x) \ge \deg_G(x)$, the vertex $x$ can 
\emph{topple} and send one 
particle along each edge incident with $x$.
Particles that reach the sink are lost. A sandpile 
is \emph{stable}, if no vertex can topple, that is,
if $\eta(x) < \deg_G(x)$ for all $x \in V$.

We define a Markov chain on the set of stable 
sandpiles as follows. At each step, we add a particle
at a uniformly random vertex of $V$, and carry out 
any possible topplings until a stable sandpile is
reached. It was shown by Dhar \cite{Dhar90} that 
the resulting stable sandpile does not depend
on the order of topplings (Abelian property), and 
the stationary distribution is unique and uniformly 
distributed on the set of recurrent states. 
We denote the stationary distribution by $\nu_G$
or $\nu_V$. Bak, Tang and Wiesenfeld \cite{BTW87} 
introduced the above model in a less general setting,
prior to the work of Dhar, and hence the model 
is also known as the BTW sandpile. See the 
surveys \cite{Redig} and \cite{HLMPPW} for background.

When $G = (V,E)$ is an infinite, locally finite,
connected graph, 
and $V_1 \subset V$ is a finite set, we form
the graph $G_{V_1} = (V_1 \cup \{ s \}, E_{V_1})$,
by identifying all vertices in $V \setminus V_1$
to a single vertex, that becomes the sink $s$
of $G_{V_1}$, and removing loop-edges at $s$. 
It was shown by Athreya and J\'arai \cite{AJ04}
that when $G = \Z^d$, $d \ge 2$, and 
$V_1 \subset V_2 \subset \dots \subset \Z^d$ is a sequence
of cubes with union $\Z^d$, then the stationary 
measures $\nu_{V_n}$ converge weakly to a limit $\nu$,
called the \emph{sandpile measure} of $\Z^d$.
This was generalized to certain other infinite graphs 
in \cite{Jarai11}. Our main result in this paper will be 
a new simple construction of the sandpile measure $\nu$
on general graphs $G$, under the sole assumption:
\eqn{e:one-end} 
{ \parbox{7cm}{each tree in the Wired Uniform Spanning Forest on 
    $G$ has one end almost surely.} } 
The Wired Uniform Spanning Forest is a random spanning subgraph
of $G$ that is obtained, through a limiting process,
from uniformly random spanning trees of finite graphs;
see Section \ref{sec:defn}, where we also define the notion 
of ``end''.
For certain special cylinder events, called generalized minimal 
subconfigurations, the limiting probability on $G$ will be
shown to exist even without assumption \eqref{e:one-end}. 

A result of Majumdar and Dhar \cite{MD92} plays an
important role in our proofs. These authors
used the burning algorithm of Dhar \cite{Dhar90} to 
construct a bijection between recurrent sandpiles on $G$ 
and spanning trees of $G$. Since the stationary measure 
$\nu_G$ is uniform on the set of recurrent sandpiles,
the burning bijection maps it to the uniform measure
on spanning trees of $G$. This is known as the Uniform 
Spanning Tree measure; see e.g.~\cite{LPbook} for 
background.

In many ways, the Uniform Spanning Tree is 
an easier object than recurrent sandpiles.
One of its features that we use in this paper is that
its marginal on a fixed set of edges is given by a 
simple determinantal formula. Let $T_G$ denote a random spanning 
tree of $G$, chosen according to the uniform distribution. 
The Transfer Current Theorem of 
Burton and Pemantle \cite{BP93} implies that
the edges of $T_G$ form a \emph{determinantal process}.
That is, there exists a matrix $Y_G(e,f)$, $e, f \in E$, the 
\emph{transfer current matrix}, such that for any 
$k \ge 1$ and distinct edges $e_1, \dots, e_k \in E$ 
we have
\eqn{e:TCT}
{ \Pr [ e_1, \dots, e_k \in T_G ]
  = \det ( Y_G(e_i, e_j) )_{i,j=1}^k. }
The matrix $Y_G$ arises from a connection between spanning
trees, electrical networks and random walk; see for example
\cite{BLPS01} for an exposition of these connections.
For the sake of this introduction, we state the interpretation 
of $Y_G$ in terms of random walk. Orient each edge in $E$ 
arbitrarily. Given oriented edges $e$ and $f$, consider 
simple random walk on $G$ started at the tail of $e$
and stopped at the first time it reaches the head of $e$.
Let $J^e(f)$ denote the expected net usage of $f$, that 
is, the expected number of times the walk uses $f$, minus the
expected number of times it uses the reversal of $f$.
Then $Y_G(e,f) = J^e(f)$ can be taken as the definition 
of $Y_G$; see \cite{DS84} or \cite[Theorem 4.1]{BLPS01}. 
Note that it is not difficult to see from this definition, 
using reversibility of the random walk, that the determinant 
on the right hand side of \eqref{e:TCT} does not depend
on the chosen orientation of the edges.
As pointed out in \cite{BP93}, the transfer current matrix 
can be expressed in terms of the 
Green function of simple random walk on $G$. 
In order to state this, first note that $J^e(f)$ is
not affected if we replace the discrete time
simple random walk by the continuous time simple 
random walk $\{ S(t) \}_{t \ge 0}$ that crosses 
each edge at rate $1$. (The generator
of $S$ is the negative of the graph Laplacian.)
For vertices $x, y \in V \cup \{s \}$, let 
\eqnst
{ H(x,y)
  := \lim_{t \to \infty} 
    \E \Big[ \int_0^t \left(I [ S(t) = y ] 
    - I [ S(t) = x ] \right) dt 
    \Big| S(0) = x \Big]. }
The limit exists due to exponentially fast convergence 
to the uniform stationary distribution.
If the tail and head of $e$ are $x = \underline{e}$ 
and $y = \overline{e}$, and the tail and head of $f$ are
$u = \underline{f}$, $w = \overline{f}$, then 
it is not difficult to see that 
$J^e(f) = H(x,u) - H(y,u) - H(x,w) + H(y,w)$.

No simple expression similar to \eqref{e:TCT} is
known for the marginal of $\nu_G$ on a fixed
set of vertices. On the other hand, some determinantal 
formulas exist for special subconfigurations. 
Majumdar and Dhar \cite{MD91} showed that on $\Z^d$, 
$d \ge 2$, the probability 
$p_0(d) = \nu [ \eta : \eta(0) = 0 ]$ 
can be written as a determinant involving the
simple random walk potential kernel $a(x) = H(0,x)$ ($d = 2$) 
or the Green function $G(x)$ ($d \ge 3$); 
see e.g.~\cite[Chapter 4]{LL10} for the definitions of
$a(x)$ and $G(x)$. 
In $d = 2$ the result is the explicit value 
$p_0(2) = \frac{2}{\pi^2} - \frac{4}{\pi^3}$. 
Majumdar and Dhar also showed that in dimensions 
$d \ge 2$, the correlation between
the events of seeing no particle at $x$ and $y$,
respectively, decays as
\eqnst
{ \nu [ \eta(x) = 0, \eta(y) = 0 ] - p_0(d)^2
  \sim c |x - y|^{-2d}, \quad\text{as $|x - y| \to \infty$.} } 
More generally, the probability of the event that none of the 
vertices $x_1, \dots, x_k$ has a particle, is given by a 
``block-determinantal'' formula \cite{MD91}. 
This can be written as
\eqnst
{ \nu [ \eta(x_1) = 0, \dots, \eta(x_k) = 0 ]
  = \det ( M(i,j) )_{i,j=1}^k. }
where, in its most reduced form, each $M(i,j)$ is 
a $(2d-1) \times (2d-1)$ matrix block. The above explicit
form was exploited by D\"{u}rre \cite{Duer09}, 
who gave rigorous scaling limit results in 2D for the 
random field of vertices having no particles.

In its most general form, the method of Majumdar and Dhar
applies to \emph{minimal subconfigurations}. We say that
a stable configuration $\xi$ of particles on a subset 
$W \subset V$ is \emph{minimal}, if it has an extension
to a recurrent sandpile on $V$, but removing a particle 
from any of the vertices in $W$ would render such an
extension impossible. (In fact, for technical reasons, 
we are going to distinguish between \emph{minimal} and
\emph{generalized minimal} subconfigurations, but this
distinction can be ignored for now.) 
The computations quoted above are all examples of the form
$W = \{ x_1, \dots, x_k \}$, $\xi(x_i) = 0$, $i = 1, \dots, k$.
In Theorems \ref{thm:minimal-finite} and 
\ref{thm:minimal-infinite} below, we formulate the
general statement that the probability of any minimal 
subconfiguration can be written as a determinant involving 
the transfer current matrix. This type of result appears
to be taken for granted in the physics literature,
see e.g.~\cite{MR01}. Yet, we have not found it clearly 
stated anywhere in a general form, and it seems worthwhile
to record it here. 
Our formulation in terms of the transfer current matrix
is slightly different from what is usually used in the
physics community. We believe that our formulation 
highlights what makes the theorem work: namely that 
minimal events can be expressed, via the burning bijection, 
as the absence of a fixed set of edges from the Uniform 
Spanning Tree.

Consider now the probabilities of the non-minimal events:
\eqnst
{ p_i(d) 
  := \nu [ \eta : \eta(0) = i ], \quad i = 1, \dots, 2d-1, 
     \quad d \ge 2, } 
where $\nu$ is the sandpile measure on $\Z^d$.
Majumdar and Dhar \cite[Eqn.~(14)]{MD91}
gave an infinite series for the value of $p_1(2)$
where each term in the series can be written as a
determinant. A rigorous proof that the series indeed
converges to $p_1(2)$ can be given based on the fact that 
assumption \eqref{e:one-end} is satisfied for $\Z^d$,
$d \ge 2$. A similar series can be given for $p_i(d)$
in general. It was by considering extensions of 
the series of Majumdar and Dhar that we arrived at our 
main result, Theorem \ref{thm:sandpile}, saying that 
under assumption \eqref{e:one-end}, a unique sandpile measure
$\nu$ exists. Some of the arguments of Levine and Peres 
\cite{LP11} were also inspiring, who prove fascinating
connections between the average number of particles
$\sum_{i = 0}^{d-1} i p_i(d)$, and seemingly unrelated
constants in other models. 

Construction of the sandpile measure $\nu$ is the first 
step in understanding the asymptotic behaviour of the model
on a growing sequence of subgraphs of an infinite graph $G$. 
Hitherto $\nu$ has only been shown to exist under more restrictive
assumptions on $G$, such as connectedness of the Wired
Uniform Spanning Forest, or for certain transitive graphs; 
see \cite{AJ04,Jarai11}.
We believe that the greater generality of our theorem 
will be useful in studying Abelian sandpiles on some irregular 
graphs, for example, graphs obtained as the result of a 
random process. We know that assumption \eqref{e:one-end} 
cannot be omitted: J\'arai and Lyons \cite{JL07} show that 
on graphs of the form $G = \Z \times G_0$, where $G_0$ is
any connected finite graph with at least two vertices,
there are two distinct ergodic weak limit points of the 
family $\{ \nu_U : \text{$U \subset V(G)$, $U$ finite} \}$.
The following question, that is a strengthened form of 
a question of \cite{JL07}, remains open, even in the case 
of $\Z^2$. 

\begin{open} 
\label{open:avalanches}
Assume that $G$ is recurrent, satisfies 
assumption \eqref{e:one-end}, and $o$ is a fixed vertex 
of $G$. Draw a configuration from the measure $\nu$, add
a particle at $o$, and carry out all possible topplings.
Is it true that there are finitely many topplings $\nu$-a.s.?
\end{open}

Note that the statement holds when $G$ is transient and 
satisfies assumption \eqref{e:one-end}; this follows by the
argument of \cite[Theorem 3.11]{JR08}.

In Section \ref{sec:defn} we give further definitions and 
state our results. In Section \ref{sec:prelims}
we collect preliminary results. In Section \ref{sec:sandpile} 
we give the general construction of sandpile measures.

A brief announcement of our results appeared in the 
proceedings \cite{OWR}.

\section{Definitions and main results}
\label{sec:defn}

Let $G = (V \cup \{ s \}, E)$ be a finite connected 
multigraph.  We write $b_{uv}$ for the number of edges 
between vertices $u$ and $v$, and write $u \sim v$ if $b_{uv} \ge 1$. 
We write $\cR_G$ for the set of recurrent sandpiles on $G$.
Let $F \subset V$. We say that the stable sandpile $\eta$ 
on $G$ is \emph{ample} for $F \subset V$, if there exists 
$x \in F$ such that $\eta(x) \ge \deg_F(x)$, where 
$\deg_F(x) = \# \{ y \in F : x \sim y \}$. (Here $\# A$
denotes the number of elements of $A$.) By the well-known 
burning test of Dhar \cite{Dhar90} (see also 
\cite[Lemma 4.2]{HLMPPW}), we have
\eqn{e:ample}
{ \text{$\eta \in \cR_G$} \qquad \text{if and only if} \qquad 
    \text{$\eta$ is ample for every nonempty $F \subset V$.} }
Let $W \subset V$. We define the graph $G_W$ by ``wiring the 
complement of $W$'', i.e.~identifying
all vertices in $V \setminus W$ with $s$, and removing 
loop-edges. Due to the criterion \eqref{e:ample}, the restriction
of any $\eta \in \cR_G$ to $W$, denoted $\eta_W$, is in $\cR_{G_W}$. 
When the choice of $G$ is clear from the context, 
we write $\cR_W$ for $\cR_{G_W}$. 

The burning bijection \cite{MD92}
establishes a one-to-one mapping between recurrent
sandpiles and spanning trees of $G$. We will need a 
particular version of this bijection that is 
introduced in Section \ref{ssec:bijection}. 

There is a relationship between spanning trees
and electrical networks, for which we refer the
reader \cite{LPbook} or \cite[Section 4]{BLPS01}. 
The \emph{transfer current matrix} $Y_G$ is defined
by regarding $G$ as an electrical network, as follows.
Choose an orientation for each edge of $G$. Replace each edge
of $G$ by a wire of unit resistance, and hook up a
battery between the two endpoints of $e$. Suppose
that the voltage of the battery is such that in the network
as a whole, unit current flows from the tail of $e$ to the
head of $e$. Let $I^e(f)$ be the amount of current flowing 
through edge $f$ consistent with its orientation. (Hence 
$I^e(f)$ can be positive or negative depending on whether the
current flows in the same direction or not as the orientation
of $f$). The matrix $Y_G(e,f) := I^e(f)$, indexed by the
edges of $G$, is the \emph{transfer current matrix}.
(See \cite{DS84} for a proof that $I^e(f) = J^e(f)$.)
An example is given in Figure \ref{fig:current-example}.
\begin{figure}
\centering
\setlength{\unitlength}{1cm}
\begin{picture}(15,6)
\thicklines
\put(1,2){\circle*{0.2}}
\put(5,2){\circle*{0.2}}
\put(3,5){\circle*{0.2}}
\qbezier(4.5,1.8)(3,1)(1.5,1.8)
\put(1.5,1.8){\vector(-3,1){0}}
\qbezier(4.5,2.2)(3,3)(1.5,2.2)
\put(1.5,2.2){\vector(-3,-1){0}}
\qbezier(1.2,2.5)(2,3.5)(2.8,4.5)
\put(2.8,4.5){\vector(3,4){0}}
\qbezier(4.8,2.5)(4,3.5)(3.2,4.5)
\put(4.8,2.5){\vector(3,-4){0}}
\put(2.9,1){$e_1$}
\put(2.9,2.8){$e_2$}
\put(1.6,3.6){$f$}
\put(4.4,3.6){$g$}
\put(7,3){$\displaystyle{Y_G = 
\begin{pmatrix}  
2/5 & 2/5 & -1/5 & -1/5 \\
2/5 & 2/5 & -1/5 & -1/5 \\
-1/5 & -1/5 & 3/5 & -2/5 \\
-1/5 & -1/5 & -2/5 & 3/5
\end{pmatrix}}$}
\end{picture}
\caption{Example of a transfer current matrix. The columns 
correspond to the edges in the order $e_1, e_2, f, g$.
The entries can be computed by simple applications of the 
series-parallel laws. There are $5$ spanning trees.}
\label{fig:current-example}
\end{figure}
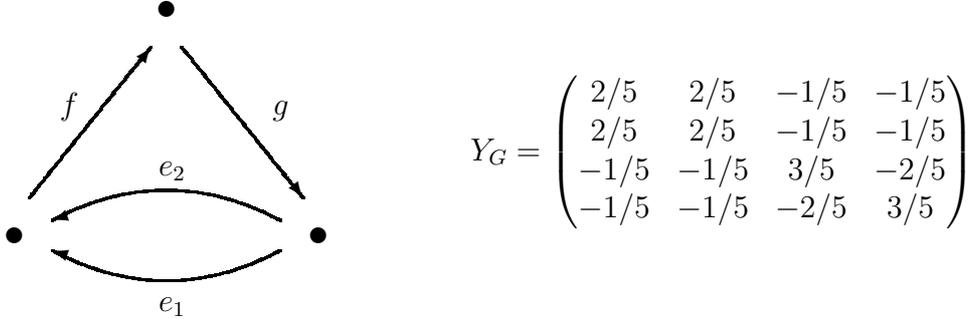

We define the matrix
\eqnst
{ K_G(e,f)
  := \delta_G(e,f) - Y_G(e,f), }
where $\delta_G$ is the identity matrix. An extension of the 
Transfer Current Theorem \cite[Corollary 4.4]{BP93}
implies that if $e_1, \dots, e_k$ are distinct
edges and $T_G$ is a uniformly random spanning tree of $G$,
then
\eqn{e:extendedTCT}
{ \Pr [ e_1, \dots, e_k \not\in T_G ]
  = \det ( K_G(e_i,e_j) )_{i,j=1}^k. }

We always regard $T_G$ as a rooted tree, with root at $s$.
By analogy with family trees, we call a vertex $x$ 
a \emph{descendent} of the vertex $y$, if $y$ lies on the 
unique path from $x$ to $s$ (here we allow $x = y$).

The above can be extended to a locally finite, connected, infinite 
graph $G = (V,E)$ as follows. By an exhaustion of $G$ we mean 
a sequence of finite subsets 
$V_1 \subset V_2 \subset \dots \subset V$ such that
$\cup_{n \ge 1} V_n = V$. Form the finite graphs
$G_n := G_{V_n} = (V_n \cup \{ s \}, E_n)$ by identifying 
$V \setminus V_n$ to a single vertex $s$, and removing 
loops at $s$. Note that each edge in $E_n$ can be regarded,
in a natural way, as an element of $E$.
Let $\mu_n$ denote the probability measure on 
$\{ 0, 1 \}^{E_n}$ that is supported on spanning trees of $G_n$ 
and gives equal weight to each spanning tree.
Here the value $1$ corresponds to an edge being present,
and we identify any spanning subgraph of $G_n$ with the 
set of edges it contains. We write $T_{G_n}$ for the random 
variable on $\{ 0, 1 \}^{E_n}$ that is the collection 
of edges contained in the corresponding spanning subgraph.
We similarly define the random variable $T_G$ on the 
space $\{ 0, 1 \}^E$.
By a result of Pemantle \cite{Pem91} (see also \cite{BLPS01}),
the weak limit $\mu = \lim_{n \to \infty} \mu_n$ exists,
as a measure on $\{ 0, 1 \}^E$, and is independent of the 
exhaustion. That is, for any finite
sets $B, K \subset E$ we have
\eqnst
{ \mu [ T_G \cap K = B ]
  = \lim_{n \to \infty} \mu_n [ T_{G_n} \cap K = B ]. }
The measure $\mu$ is called the 
\emph{Wired Uniform Spanning Forest measure} of $G$
(we henceforth abreviate this to WSF).
The term ``wired'' refers to the particular choice 
of boundary condition, that is, the identification
of vertices in $V \setminus V_n$. 
The measure $\mu$ is supported on spanning subgraphs 
of $G$, each of whose components is an infinite tree.

A \emph{ray} in an infinite tree is an infinite self-avoiding path.
An \emph{end} of an infinite tree is defined as an equivalence 
class of rays, where two rays are equivalent, if they have a finite 
symmetric difference. Hence an infinite tree has one end 
if and only if it contains no two disjoint infinite
self-avoiding paths.

The \emph{wired current} in $G$ is the pointwise limit 
$I^e = \lim_{n \to \infty} I^e_{V_n}$ that exists by monotonicity; 
see e.g.~\cite{BLPS01}.
Both \eqref{e:TCT} and \eqref{e:extendedTCT} have 
limiting versions on $G$ \cite[Theorem 4.2]{BP93},
involving $Y_G(e,f) = I^e(f) = \lim_{n \to \infty} Y_{G_n}(e,f)$
and $K_G(e,f) = \lim_{n \to \infty} K_{G_n}(e,f)$.

Properties of the WSF have been studied 
extensively. Pemantle \cite{Pem91} proved that on $\Z^d$,
$2 \le d \le 4$, $T_G$ is a single tree and has one end
$\mu$-a.s. He also proved that when $d \ge 5$, $T_G$ 
consists of infinitely many trees, and each has one or 
two ends $\mu$-a.s. This was completed and extended 
by Benjamini, Lyons, Peres and Schramm \cite{BLPS01},
who showed in particular that in the case $d \ge 5$
each tree has one end $\mu$-a.s. In fact, their general
result (Theorem 10.1 in the reference above) implies that 
on any Cayley graph that is not a finite extension of 
$\Z$, each tree in $T_G$ has one end $\mu$-a.s. 
Examples of graphs where $T_G$ is connected and 
has two ends $\mu$-a.s.~are provided by graphs of
the form $G = \Z \times G_0$, where $G_0$ is a finite,
connected graph; see \cite[Proposition 10.10]{BLPS01}
for a more general result.
Lyons, Morris and Schramm \cite{LMS08} gave a very general
condition on the isoperimetric profile of a graph
that ensures that each tree in $T_G$ has one end $\mu$-a.s. 
From the above it is clear that assumption \eqref{e:one-end} 
is known to hold on many graphs.

We regard any one-ended infinite tree as being ``rooted at
infinity'', and we call vertex 
$x$ a \emph{descendent} of vertex $y$, if
$y$ lies on the (necessarily unique) infinite 
self-avoiding path in the tree that starts at $x$.
(Here we allow $x = y$.)

The starting point for this paper is the notion of a minimal 
configuration. It is well-known, and easy to see 
using \eqref{e:ample}, that if $\eta$ is a recurrent sandpile 
on $G$, then adding more particles to $\eta$ also results 
in a recurrent sandpile, as long as it remains stable. 
Let $\delta_u$ denote the sandpile with a single 
particle at $u$ and no other particles. 

\begin{definition}
\label{defn:minimal}
Let $G = (V \cup \{ s \}, E)$ be a finite, 
connected multigraph and let
$\es \not= W \subset V$ be such that $G \setminus W$ (the graph 
obtained from $G$ by removing the vertices in $W$) is connected. 
Let $\xi$ be a stable configuration on $W$. 
We say that $\xi$ is \emph{minimal}, if there exists a recurrent
sandpile $\eta \in \cR_G$ such that $\eta_W = \xi$, and for 
any such $\eta$ and any $w \in W$, we have
$\eta - \delta_w \not\in \cR_G$.
\end{definition}

\begin{remark}
Equivalently, it is enough to require that the sandpile
\eqn{e:etastar}
{ \eta^* (x)
  = \begin{cases}
    \xi(x)       & \text{if $x \in W$;}\\
    \deg_G(x)-1  & \text{if $x \in V \setminus W$;}
    \end{cases} }
is in $\cR_G$ and for all $w \in W$ we have 
$\eta^* - \delta_w \not\in \cR_G$. This implies that 
$\xi$ is minimal if and only if $\xi \in \cR_{G_W}$, but
for any $w \in W$ we have $\xi - \delta_w \not\in \cR_{G_W}$.
\end{remark}

\begin{definition}
\label{defn:gen-minimal}
When the restriction that $G \setminus W$ be connected is
dropped, and $\xi$ satisfies the requirements of
Definition \ref{defn:minimal}, we say that $\xi$ is
\emph{generalized minimal}. (That is, in this case
we allow $W$ to have ``holes'').
\end{definition}

We extend Definition \ref{defn:minimal} 
to infinite graphs as follows.

\begin{definition}
\label{defn:minimal-infinite}
Let $G = (V,E)$ be a locally finite, connected, infinite graph, and 
let $W \subset V$ be a finite set such that all 
connected components of $G \setminus W$ are infinite.
Let $\xi$ be a stable configuration on $W$. 
We say that $\xi$ is \emph{minimal}, if for some (and 
then for any) finite $V_1 \supset W$ the configuration 
$\xi$ is minimal in the graph $G_{V_1}$.
When $G \setminus W$ is allowed to have finite components,
we call $\xi$ \emph{generalized minimal}, if it is 
generalized minimal with respect to some (and then for any) 
finite $V_1 \supset W$ for which $G \setminus V_1$ has only 
infinite components. 
\end{definition}

Theorems \ref{thm:minimal-finite} and \ref{thm:minimal-infinite}
below summarizes what can be proved using the method of 
Majumdar and Dhar \cite{MD91}. The extension to 
generalized minimal configurations appear to be new.
Let $\Delta_G$ denote the graph Laplacian on $G$, that is:
\eqnst
{ \Delta_G(x,y)
  = \begin{cases}
    \deg_G(x) & \text{if $x = y$;}\\
    - b_{xy} & \text{if $x \sim y$;}\\
    0       & \text{otherwise;}
    \end{cases} \qquad x, y \in V; }
where $x \sim y$ denotes that $x$ and $y$ are adjacent. 

\begin{theorem}
\label{thm:minimal-finite}
Let $G = (V \cup \{ s \}, E)$ be a finite, connected multigraph,
and let $\xi$ be minimal on $W \subset V$. There exists 
a subset $\cE$ of the set of edges touching $W$ such that 
\eqn{e:minimal}
{ \nu_G [ \eta : \eta_W = \xi ]
  = \det ( K_G(e,f) )_{e,f \in \cE}. }
The statement remains true for generalized minimal
configurations.
\end{theorem}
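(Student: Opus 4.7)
The plan is to reduce the statement to an application of the extended Transfer Current Theorem \eqref{e:extendedTCT} via the burning bijection of Majumdar and Dhar. Let $\Phi$ denote a version of this bijection between $\cR_G$ and the set of spanning trees of $G$, and let $\mu_G$ denote the uniform measure on spanning trees. Since $\nu_G$ is uniform on $\cR_G$ and its push-forward under $\Phi$ is $\mu_G$, we have
\[ \nu_G [ \eta : \eta_W = \xi ]
  = \mu_G [ T : \Phi^{-1}(T)_W = \xi ]. \]
The core combinatorial claim is that for any minimal $\xi$ on $W$, there exists an explicit set $\cE = \cE(\xi)$ of edges of $G$, all touching $W$, with the property that
\[ \{ \eta \in \cR_G : \eta_W = \xi \}
  = \Phi^{-1} \bigl( \{ T : T \cap \cE = \es \} \bigr). \]
Given this, the determinantal formula \eqref{e:minimal} is immediate from \eqref{e:extendedTCT}.

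To identify $\cE$, I would run the burning algorithm on the canonical extension $\eta^*$ defined in \eqref{e:etastar}. Each $x \in V \setminus W$ has $\eta^*(x) = \deg_G(x) - 1$ and burns as soon as any one of its neighbors burns, while by minimality each $w \in W$ becomes burnable only after exactly $\deg_G(w) - \xi(w)$ of its neighbors have burned, and never sooner. The tie-breaking rule of the burning bijection then excludes from $\Phi(\eta^*)$ a specific collection of edges incident to each $w \in W$ -- namely those going to the ``late'' neighbors that do not participate in burning $w$ -- and these together constitute $\cE$. Crucially, this description refers only to $\xi$ and the local combinatorics at each $w \in W$, not to the rest of $\eta^*$. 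One then verifies: (i) if $\eta \in \cR_G$ and $\eta_W = \xi$, then the burning order at each $w \in W$ coincides with that for $\eta^*$ (the ordering of vertices of $V \setminus W$ being irrelevant for the incidences at $W$), so $\Phi(\eta) \cap \cE = \es$; (ii) conversely, if $T$ is a spanning tree with $T \cap \cE = \es$, then the inverse bijection forces $\Phi^{-1}(T)_W = \xi$, because the absence of $\cE$ saturates the count of ``burnt-before'' edges at each $w$ to exactly $\deg_G(w) - \xi(w)$.

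The main obstacle will be the careful combinatorial bookkeeping required to define $\cE$ and to prove the equivalence (i)--(ii) rigorously from the precise tie-breaking rule of the burning bijection. The minimality hypothesis is essential in (ii): if $\xi$ could still admit a recurrent extension after removing some particle at $w$, the ``correct'' number of absent edges at $w$ would not be forced by $T \cap \cE = \es$ and the correspondence would fail. Finally, the extension to generalized minimal configurations requires no new ideas: the construction of $\cE$, the burning argument, and the combinatorial verification all take place locally at $W$ and its neighbors in $V$, making no reference to the global connectivity of $G \setminus W$. Hence the same determinantal identity \eqref{e:minimal} holds in that case as well.
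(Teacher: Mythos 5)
Your high-level strategy --- push $\{\eta_W = \xi\}$ through the burning bijection onto an event of the form $\{T \cap \cE = \es\}$ and then apply \eqref{e:extendedTCT} --- is exactly the paper's, but the central combinatorial step fails as you have set it up. Under the usual (single-phase) burning bijection, the out-edge assigned to $w \in W$ is chosen, via the tie-breaking order, from the edges joining $w$ to the vertices burnt \emph{at the immediately preceding time step}; this candidate set depends on the relative timing with which the neighbours of $w$ in $V \setminus W$ burn, which in turn depends on $\eta$ restricted to $V \setminus W$ and not only on $\xi$. For instance, if $w$ has two neighbours $y_1, y_2 \notin W$, one recurrent extension may burn both at time $1$ (so the tie-break selects among the two edges), while another burns $y_2$ much later (so the edge $\{w,y_2\}$ is forced). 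Hence no fixed $\cE$ makes your claim (i) true for the usual bijection, and your parenthetical ``the ordering of vertices of $V\setminus W$ being irrelevant for the incidences at $W$'' is precisely the false step. The paper's proof supplies the two ingredients you are missing: Lemma \ref{lem:burn_alone}, which uses minimality to show that on $\{\eta_W=\xi\}$ the event $E_{V,W}$ of \eqref{e:EVQ} always occurs (all of $V\setminus W$ can be burnt before touching $W$); and the two-phase bijection based on $Q=W$ from Section \ref{ssec:bijection}, under which phase two reduces to the burning of $\xi$ alone in the wired graph $G_W$, so the out-edges at $W$ depend only on $\xi$, and a counting argument identifies $\{\eta_W=\xi\}$ with $\{t\cap\cE=\es\}$ where $\cE$ is the complement, among the edges touching $W$, of the spanning tree of $G_W$ associated to $\xi$. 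Minimality is thus used chiefly to force $E_{V,W}$, not only in the converse direction as you suggest.

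The generalized minimal case is also not ``no new ideas.'' When $G\setminus W$ has finite components, these holes cannot be burnt before $W$, their configurations are not determined by $\xi$, and the tree edges inside them vary over extensions, so the argument is no longer local to $W$ and its neighbours. The paper needs Lemmas \ref{lem:burn_hole} and \ref{lem:one_entry} (each hole is entered from exactly one vertex of $W$, after which it burns completely) together with the auxiliary graph $G_W^*$ obtained by collapsing each hole onto its entry vertex, in order to again express $\{\eta_W=\xi\}$ as the absence of a fixed edge set from the uniform spanning tree.
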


\begin{remark}
Alternatively, following the arguments of \cite{MD91},
the matrix can be replaced by some 
$R_{G,W}$ whose entries can be expressed in terms of 
$\Delta_G^{-1}(x,y)$, $x, y \in W \cup \partial_\extr W$, 
with 
$\partial_\extr W = \{ y \in V \setminus W : 
\text{$\exists$ $x \in W$, $x \sim y$} \}$.
\end{remark}

\begin{theorem}
\label{thm:minimal-infinite}
Let $G = (V,E)$ be a locally finite, connected, infinite graph, 
and $V_1 \subset V_2 \subset \dots \subset V$ any 
exhaustion. Let $W \subset V$ be finite, and let $\xi$ be minimal 
on $W$. There exists a subset $\cE$ of the set of edges touching 
$W$ such that 
\eqn{e:minimal2}
{ \lim_{n \to \infty} \nu_{V_n} [ \eta : \eta_W = \xi ]
  = \det ( K_G(e,f) )_{e,f \in \cE}. }
The statement remains true for generalized minimal
configurations.
\end{theorem}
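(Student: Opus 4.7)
The plan is to apply Theorem \ref{thm:minimal-finite} to each finite graph $G_n := G_{V_n}$ in the exhaustion and then pass to the limit. Fix $n$ large enough that $V_n \supset W$ and that every neighbor in $G$ of a vertex in $W$ lies in $V_n$. Then $\deg_{G_n}(w) = \deg_G(w)$ and the adjacency structure of $W$ in $G_n$ agrees with that in $G$, so the minimality (respectively generalized minimality) of $\xi$ on $W$, as defined on $G$ via Definition \ref{defn:minimal-infinite}, translates into minimality (respectively generalized minimality) of $\xi$ on $W$ inside $G_n$. Theorem \ref{thm:minimal-finite} applied to $G_n$ therefore yields
\eqnst
{ \nu_{V_n} [ \eta : \eta_W = \xi ]
  = \det ( K_{G_n}(e,f) )_{e,f \in \cE_n} }
for some set $\cE_n$ of edges of $G_n$ touching $W$.

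The main obstacle is to show that $\cE_n$ stabilizes to a single finite set $\cE$ of edges of $G$ touching $W$, once $n$ is large. The construction of $\cE$ in Theorem \ref{thm:minimal-finite} proceeds through the burning bijection, which translates the event $\{ \eta_W = \xi \}$ into the event that a prescribed set of edges is absent from the uniform spanning tree. The selection of these edges at each $w \in W$ is dictated by $\xi(w)$, $\deg_G(w)$, and the adjacency structure within $W \cup \partial_\extr W$, all of which coincide for $G$ and $G_n$ as soon as $V_n \supset W \cup \partial_\extr W$. In particular, under the natural identification of edges of $G_n$ touching $W$ with the (finitely many) edges of $G$ touching $W$, the set $\cE_n$ equals a fixed finite set $\cE$ for all sufficiently large $n$. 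Verifying this locality of the burning procedure is the only nontrivial piece of the argument.

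Once $\cE$ has been identified, the conclusion is immediate from the pointwise convergence $K_{G_n}(e,f) \to K_G(e,f)$ for each $e,f \in \cE$, which is the extension of \eqref{e:extendedTCT} to $G$ recorded after the definition of the WSF, together with continuity of the determinant in finitely many entries. Passing to the limit gives \eqref{e:minimal2}. The generalized minimal case is handled identically: pick any finite $V_1 \supset W$ such that $G \setminus V_1$ has only infinite components, restrict to $n$ with $V_n \supset V_1$, and apply the generalized minimal part of Theorem \ref{thm:minimal-finite} to each $G_n$; the same locality and convergence then deliver the result.
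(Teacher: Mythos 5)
Your proposal is correct and follows essentially the same route as the paper: both reduce to Theorem \ref{thm:minimal-finite}, observe that the edge set $\cE$ produced there is determined by the local data near $W$ (a spanning tree $t_0$ of $G_W$, resp.\ $G_W^*$) and hence is the same for all large $n$, and then pass to the limit --- you via entrywise convergence $K_{G_n} \to K_G$ and continuity of the determinant, the paper equivalently via weak convergence of the uniform spanning tree measures on the cylinder event $\{ T_{G_n} \cap \cE = \es \}$.
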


\begin{remark}
When $G = \Z^d$, $d \ge 2$, the matrix can be replaced by 
some $R_W$ with entries expressed in terms of $a(x)$ or $G(x)$.
\end{remark}

We now state our general construction of sandpile measures.

\begin{theorem}
\label{thm:sandpile}
Let $G = (V,E)$ be a locally finite, connected, infinite graph. 
Suppose that $G$ satisfies the one-end property \eqref{e:one-end}.
There exists a unique measure $\nu$ on the space 
$\prod_{x \in V} \{ 0, \dots, \deg_G(x) - 1 \}$ such that along 
any exhaustion $V_1 \subset V_2 \subset \dots \subset V$
the measures $\nu_{V_n}$ converge weakly to $\nu$.
\end{theorem}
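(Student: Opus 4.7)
The plan is to reduce the computation of $\nu_{V_n}[\eta_W = \xi]$ for an arbitrary stable $\xi$ on a finite $W \subset V$ to a countable sum of probabilities of generalized minimal events, each of which converges along the exhaustion by Theorem~\ref{thm:minimal-infinite}.

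First, I would fix a finite $W \subset V$ and a stable $\xi$ on $W$. For $n$ large enough that $V_n \supset W$, I would partition $\{\eta \in \cR_{V_n} : \eta_W = \xi\}$ by a canonical \emph{minimalisation} procedure: given $\eta$, grow $W$ outward inside $V_n$, at each step adjoining any vertex of $V_n$ adjacent to the current region whose inclusion is forced because the configuration on the current region fails to be generalized minimal. The procedure terminates in a pair $(W'(\eta), \xi'(\eta))$, determined by $\eta$, with $W \subset W'(\eta) \subset V_n$, $\xi'(\eta)|_W = \xi$, and $\xi'(\eta)$ generalized minimal on $W'(\eta)$ in $G_n$. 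The resulting partition yields
\eqnst{\nu_{V_n}[\eta_W = \xi] \;=\; \sum_{(W', \xi')} \nu_{V_n}[\eta_{W'} = \xi'],}
the sum running over all admissible extensions $(W', \xi')$.

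For each fixed extension, Theorem~\ref{thm:minimal-infinite} provides a limit of the form $\det(K_G(e,f))_{e,f \in \cE(W', \xi')}$. To interchange limit and sum I would need uniform summability in $n$. This is where assumption \eqref{e:one-end} enters decisively: via the burning bijection, the minimalisation corresponds to tracing forced tree-edges from $W$ towards the sink, and the one-end property of the WSF on $G$ ensures that each tree in the limiting forest has a single infinite ray. Coupling the UST on $G_n$ with the WSF on $G$, I would aim to establish a probabilistic tail bound on the diameter of $W'(\eta) \setminus W$, uniform in $n$, which enables dominated convergence in the sum.

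The main obstacle is converting \eqref{e:one-end} into such a quantitative tail bound. The necessity of the hypothesis is underscored by the examples of \cite{JL07} on $\Z \times G_0$, where the WSF has components with two ends and the minimalisation can ``leak'' in either direction, producing distinct ergodic weak limits along different exhaustions. Once summability is in hand, the limits $\lim_n \nu_{V_n}[\eta_W = \xi]$ define a consistent family of finite-dimensional distributions on the compact space $\prod_{x \in V}\{0, \dots, \deg_G(x) - 1\}$; this family extends uniquely to a probability measure $\nu$, and weak convergence $\nu_{V_n} \to \nu$ follows since cylinder events form a convergence-determining class on this product space.
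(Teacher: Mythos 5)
Your overall architecture --- decompose the cylinder event into a countable sum of terms that converge individually, use \eqref{e:one-end} for tightness of the decomposition index, then pass to a limit measure by consistency on the compact product space --- is the same as the paper's, and you have correctly located where the one-end hypothesis enters (finiteness of the region ``traced back'' from $W$ through the tree). But the specific decomposition you propose does not exist. A recurrent configuration need not admit \emph{any} finite window on which its restriction is generalized minimal: take $\eta \equiv \deg_G(\cdot)-1$ on $V_n$. Then $\eta_{W'}$ is recurrent for every $W'$, yet for any $w\in W'$ with $\deg_G(w)\ge 2$ the configuration $\eta_{W'}-\delta_w$ is again recurrent (burn inward from the boundary of $W'$; every vertex, including $w$ last, still passes the burning test), so $\eta_{W'}$ is generalized minimal for no finite $W'$ and your minimalisation never terminates. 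Minimality is not a property one can force by enlarging the window --- enlarging $W'$ makes the recurrence constraint on $\xi'-\delta_w$ \emph{harder} to violate at interior vertices as easily as easier --- so there is no canonical ``generalized minimal hull'', the events $\{\eta_{W'}=\xi'\}$ with $\xi'$ generalized minimal do not cover $\{\eta_W=\xi\}$, and Theorem \ref{thm:minimal-infinite} cannot serve as the engine of the proof. (It is in fact not invoked at all in the paper's argument.)

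The paper's decomposition is indexed by the spanning tree rather than by minimal sandpile events. Running the burning bijection based on $Q=W$ in two phases (first burn everything that can be burnt without touching $Q$), the unburnt set $W_0(\eta)$ after phase one equals the set $\cD(Q)$ of descendants of $Q$ in the tree $t(\eta)$, and one obtains $\nu_{V_n}[\eta_Q=\xi]=\sum_{W'\supset Q}\nu_{V_n}[E_{V_n,W'}]\,\nu_{G_{W'}^*}[\eta_Q=\xi]$, where the second factor is a finite combinatorial quantity independent of $n$ and the first is the probability of a cylinder event of the uniform spanning tree involving edge \emph{orientations} (hence not of the determinantal form \eqref{e:extendedTCT}); it converges by Pemantle's theorem. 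Tightness then requires no quantitative tail bound of the kind you were seeking: \eqref{e:one-end} gives $\mu[|\cD(Q)|<\infty]=1$, so one picks a finite $V_0'$ with $\mu[\cD(Q)\not\subset V_0']<\eps$ and transfers this to $\mu_{V_n}$ for large $n$ by weak convergence, enlarging $V_0'$ to handle the remaining finitely many $n$. To rescue your plan, replace ``generalized minimal extension of $\xi$'' by ``descendant set of $W$ together with the phase-one-unburnt configuration on it''; the soft tightness argument above then completes the proof exactly as you outline in your last paragraph.
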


Our proof of Theorem \ref{thm:sandpile} is much simpler than 
earlier proofs in \cite{AJ04,Jarai11} in more restrictive
settings. We note however, that unlike the proof
of Theorem \ref{thm:sandpile}, the earlier proofs do 
give more than weak convergence, as 
they construct the sandpile measure as the image, under
a measurable map, of the WSF with extra randomness.

The proof of Theorem \ref{thm:sandpile} exhibits the 
limiting probability of a cylinder event as an infinite series. 
In the case of the event $\{ \eta : \eta(o) = k \}$ with 
some $k = 0, \dots, \deg_G(o)-1$, the series is a 
generalized version of \cite[Eqn.~(14)]{MD91}. The 
decomposition of the event into this series is
also implicit in \cite[Section 3]{Pr94}.

\section{Preliminaries}
\label{sec:prelims}

\subsection{The burning bijection}
\label{ssec:bijection}

In this section $G = (V \cup \{ s \}, E)$ is a finite,
connected multigraph
with sink $s$. Fix $Q \subset V$. We consider a particular version 
of the burning bijection \cite{MD92} depending on $Q$, for 
sandpiles on $V$. For the reader familiar with the usual construction,
we note that the idea is to burn in two phases: first we burn 
all vertices we can without touching the set $Q$, then 
the remaining vertices.

Fix a stable sandpile $\eta$ on $V$, and let 
\eqnst
{ B^{(1)}_{Q,0}
  = \{ s \} \qquad\qquad \text{and} \qquad\qquad
  U^{(1)}_{Q,0}
  = V, }
and for $i \ge 1$ define inductively
\eqnsplst
{ B^{(1)}_{Q,i}
  &= \left\{ x \in U^{(1)}_{Q,i-1} \setminus Q : 
     \eta(x) \ge \deg_{U^{(1)}_{Q,i-1}}(x) \right\} \\
  U^{(1)}_{Q,i}
  &= U^{(1)}_{Q,i-1} \setminus B^{(1)}_{Q,i}. }
We call $B^{(1)}_{Q,i}$ the vertices that burn at time $i$ in the
first phase, and $U^{(1)}_{Q,i}$ the vertices that remained unburnt
at time $i$. There exists a smallest index $I$ such that 
for $i \ge I$ we have $U^{(1)}_{Q,i+1} = U^{(1)}_{Q,i}$. To define
the second phase, set 
\eqnst
{ B^{(2)}_{Q,0}
  = \bigcup_{0 \le i \le I} B^{(1)}_{Q,i}
  = V \setminus U^{(1)}_{Q,I}  \qquad\qquad \text{and} \qquad\qquad
  U^{(2)}_{Q,0}
  = U^{(1)}_{Q,I}. }
Then for $i \ge 1$ we set
\eqnsplst
{ B^{(2)}_{Q,i} 
  &= \left\{ x \in U^{(2)}_{Q,i-1} : 
     \eta(x) \ge \deg_{U^{(2)}_{Q,i-1}}(x) \right\} \\
  U^{(2)}_{Q,i}
  &= U^{(2)}_{Q,i-1} \setminus B^{(2)}_{Q,i}. }
It is not difficult to show using \eqref{e:ample} that 
$U^{(2)}_{Q,i} = \es$ eventually if and only if $\eta \in \cR_V$.

We now define the burning bijection corresponding to the 
above burning rule. Fix for each 
$x \in V$ an ordering $<_x$ of the edges incident 
with $x$ in the graph $G_V$. We assign to 
$\eta \in \cR_V$ a spanning tree $t$ of $G_V$, by 
specifying for each $x \in V$ an oriented edge $e_x$
with $\underline{e_x} = x$ to be an edge of $t$ 
(here we write $\underline{e}$ for the tail of $e$ and 
$\overline{e}$ for the head of $e$). 
If $x \in B^{(1)}_{Q,i}$ for $i \ge 1$, then let 
\eqnsplst
{ n_x 
  &= \sum_{y \in \cup_{0 \le j < i} B^{(1)}_{Q,j}} b_{xy} \\
  P_x 
  &= \left\{ e : \underline{e} = x,\, 
     \overline{e} \in B^{(1)}_{Q,i-1} \right\}
  =: \{ e_0 <_x \dots <_x e_{|P_x|-1} \}. }
It follows from the burning rules that 
$\eta(x) = 2d - n_x + i$ for some $0 \le i < |P_x|$,
and hence we can define $e_x$ to be the $i$-th element
of $P_x$ in the order $<_x$. 
If $x \in B^{(2)}_{Q,i}$ for some $i \ge 1$, we make exactly
the same definitions replacing the superscript $(1)$ with 
$(2)$. It follows easily that $t$ is a spanning tree
of $G$. Note that we defined $t$ using directed edges,
and this way each edge of $t$ is directed towards $s$.
It is somewhat tedious, but fairly straightforward to 
check that the map is a bijection. We refer to this
as the \emph{``bijection based on $Q$''}.
When in the above $Q = \es$, we only have phase 1, and
we refer to this as the \emph{usual bijection}.
Observe that there is in fact much more flexibility 
in choosing the ordering $<_x$ than we stated above.
For example, we can allow the choice of $<_x$ to
depend on the set of vertices burnt up to the time
when $x$ is burnt. We will freely make use of this
flexibility in the sequel.

A special role will be played by the event that 
all vertices in $V \setminus Q$ can be burnt in the
first phase, that is the event
\eqn{e:EVQ}
{ E_{V,Q}
  = \{ \eta \in \cR_V : U^{(1)}_{Q,I}(\eta) = Q \}. }
Under the bijection based on $Q$, this corresponds
to the event that in $t$ there is no directed edge
pointing from $V \setminus Q$ to $Q$. 

\subsection{Minimal subconfigurations}
\label{ssec:minimal}

\begin{lemma}
\label{lem:burn_alone}
Let $G = (V \cup \{ s \},E)$ be a finite, connected multigraph, 
$W \subset V$ and let $\xi$ be minimal on $W$. For any 
$\eta \in \cR_G$ such that $\eta_W = \xi$, we have 
$\eta \in E_{V,W}$, i.e.~there is a 
burning sequence that burns all of $V \setminus W$ 
before burning any vertex in $W$.
\end{lemma}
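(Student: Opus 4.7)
The plan is to argue by contradiction: assume phase~1 with $Q = W$ applied to $\eta$ leaves a nonempty residue $F' := U^{(1)}_{W,I}(\eta) \setminus W$. I will produce a vertex $w_0 \in W$ such that $\xi - \delta_{w_0} \in \cR_{G_W}$, which contradicts the minimality of $\xi$ via the equivalent form noted in the Remark after Definition~\ref{defn:minimal}.

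To select $w_0$, exploit phase~2. Since $\eta \in \cR_G$, phase~2 eventually burns all of $U^{(2)}_{W,0} = W \cup F'$. By definition of $F'$, every $x \in F'$ satisfies $\eta(x) < \deg_{W \cup F'}(x)$, so no $F'$-vertex can burn in phase~2 until at least one of its $W$-neighbours has already been burnt. Let $j_1$ be the first phase~2 step at which some $W$-vertex with an $F'$-neighbour burns, let $w_0$ be that vertex, and let $V_* \subset W$ be the set of $W$-vertices burnt in phase~2 strictly before step $j_1$, so that every $w \in V_*$ has $\deg_{F'}(w) = 0$. The phase~2 ampleness of $w_0$ at step $j_1$ yields
\eqnst{\xi(w_0) \;\ge\; \deg_{(W \setminus V_*) \cup F'}(w_0) \;=\; \deg_{W \setminus V_*}(w_0) + \deg_{F'}(w_0) \;\ge\; \deg_{W \setminus V_*}(w_0) + 1,}
where the extra ``$+1$'' comes from $w_0$ having at least one $F'$-neighbour; in particular $\xi(w_0) \ge 1$, so $\xi - \delta_{w_0}$ is non-negative.

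Now I exhibit a legal burning schedule for $\xi - \delta_{w_0}$ in $G_W$ that empties $W$. First burn $V_*$ in $G_W$ following the phase~2 schedule: for $w \in V_*$ burnt in phase~2 at step $t$, writing $V_{t-1}$ for the $W$-vertices burnt in phase~2 before step $t$, the phase~2 ample condition $\xi(w) \ge \deg_{(W \setminus V_{t-1}) \cup F'}(w)$ reduces, using $\deg_{F'}(w)=0$, to $\xi(w) \ge \deg_{W \setminus V_{t-1}}(w)$, which is exactly the $G_W$-ample condition in the current state. Since $w_0 \notin V_*$, these burnings are equally legal for $\xi - \delta_{w_0}$. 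Next burn $w_0$: the inequality above gives $(\xi - \delta_{w_0})(w_0) \ge \deg_{W \setminus V_*}(w_0)$, which is the $G_W$-ample condition. Finally, invoke the abelian property of burning: the state ``unburnt $= W \setminus (V_* \cup \{w_0\})$'' is reached from $W$ by legal $\xi$-burnings in $G_W$, and since $\xi \in \cR_{G_W}$, continued $\xi$-legal burning from this state empties $W$. Every continuation burning concerns some $w \ne w_0$, for which $(\xi - \delta_{w_0})(w) = \xi(w)$, so the continuation is legal for $\xi - \delta_{w_0}$ as well. Hence $\xi - \delta_{w_0} \in \cR_{G_W}$, contradicting minimality.

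The main obstacle is picking $w_0$ correctly. A direct application of the burning test for $\eta$ on $G$ to the subset $W \cup F'$ only produces some $W$-vertex ample in $W \cup F'$; but this vertex may have no $F'$-neighbour, and then the gap $\xi(w_0) - \deg_W(w_0)$ can be zero, in which case removing a particle destroys the $G_W$-ampleness of $w_0$ against the witness $\{w_0\}$ and the contradiction collapses. The cure is to track phase~2 forward until the very moment the burning front first touches $F'$: the vertex igniting this propagation automatically has both an $F'$-neighbour (supplying the crucial ``$+1$'' slack) and an ample condition tailored to the correct current unburnt set. The abelian property of burning then licences transplanting the initial phase~2 schedule into the $G_W$-burning of $\xi - \delta_{w_0}$, despite the global differences between the two burning dynamics.
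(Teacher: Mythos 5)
Your proposal is correct and follows essentially the same route as the paper's own proof: both argue by contradiction, locate the first burnt vertex of $W$ adjacent to the stuck residue $F'$ (the paper's $U \setminus W$), extract the ``$+1$'' slack from that adjacency to show $\xi - \delta_{w_0} \in \cR_{G_W}$, and contradict minimality via the equivalent characterization in the Remark after Definition~\ref{defn:minimal}. Your write-up merely makes explicit the degree bookkeeping and the appeal to the abelian property that the paper leaves implicit.
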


\begin{proof}
We argue by contradiction. Suppose that $\eta \in \cR_G$,
$W \subsetneq U \subset V$, all of $V \setminus U$ can be
burnt before burning any vertex of $U$, but no further
vertex of $U \setminus W$ can be burnt without touching $W$. 
Let $v_1, v_2, \dots$
be a possible continuation of the burning of $\eta$ in $U$.
In particular, $v_1 \in W$. Let $i \ge 1$ be the smallest 
index such that $v_1, v_2, \dots, v_i \in W$ and $v_i$ 
neighbours a vertex in $U \setminus W$. Such an index has
to exist, since there will be a first time when a vertex
of $U \setminus W$ becomes burnable. Consider now 
$\xi' = \xi - \delta_{v_i}$. The sequence $v_1, v_2, \dots, v_i$
is a burning sequence for $\xi'$ that removes the vertex $v_i$,
and it follows that $\xi' \in \cR_{G_W}$. Recall that 
$G \setminus W$ is connected, so it follows that
with $\eta^*$ defined as in \eqref{e:etastar}, we have 
$\eta^* - \delta_{v_i} \in \cR_G$. This contradicts the 
assumption that $\xi$ is minimal, and hence the statement 
of the Lemma follows.
\end{proof}

The next lemma gives a recursive characterization of
minimal sandpiles.

\begin{definition}
Let $W \subset V$ such that $G \setminus W$ is connected.
The \emph{entry points} of $\xi \in \cR_{G_W}$ are the vertices 
$E(\xi,W) = \{ w_1, \dots, w_k \} \subset W$ that are 
burnable for $\xi$ in $W$ at the first step of the burning
algorithm.
\end{definition}

\begin{lemma}
\label{lem:reduction}
Let $W \subset V$ such that $G \setminus W$ is connected. 
Suppose that $\xi \in \cR_{G_W}$ is minimal, 
with entry points $E(\xi,W) = \{ w_1, \dots, w_k \}$.\\
(i) If $1 \le i < j \le k$, then $w_i \not\sim w_j$.\\
(ii) For each $i = 1, \dots, k$ we have 
$\xi(w_i) = \sum_{v \in W : v \sim w_i} b_{v w_i}$.\\
(iii) The subconfiguration $\xi_{W \setminus E(\xi,W)}$ is minimal.\\ 
Conversely, if $\xi \in \cR_{G_W}$ satisfies (i)--(iii), then
it is minimal.
\end{lemma}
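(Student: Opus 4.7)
The plan is to derive (i)--(iii) from minimality via the burning criterion \eqref{e:ample}, and then prove the converse by the same reduction that underlies (iii). Throughout write $E = \{w_1,\dots,w_k\}$ and $W' = W \setminus E$. As a preliminary, each entry point $w_i$ must have a neighbor in $V \setminus W$: otherwise $\deg_W(w_i) = \deg_G(w_i)$ and the entry-point inequality $\xi(w_i) \ge \deg_W(w_i)$ would violate stability. Combined with (i), this will guarantee that $G \setminus W'$ remains connected, so that calling $\xi|_{W'}$ minimal is meaningful.

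For (i) and (ii) I argue by contradiction, in each case exhibiting a $w \in W$ for which $\xi - \delta_w$ is ample for every nonempty $F \subseteq W$ and hence lies in $\cR_{G_W}$. Suppose $w_i \sim w_j$; I claim $\xi - \delta_{w_j} \in \cR_{G_W}$. Given $F$: if $w_j \notin F$, inherit a witness from $\xi$; if $\{w_i,w_j\} \subseteq F$, use $w_i$, which has $\xi(w_i) \ge \deg_W(w_i) \ge \deg_F(w_i)$; if $w_j \in F$ but $w_i \notin F$, then $\xi(w_j) \ge \deg_W(w_j) \ge \deg_F(w_j) + b_{w_i w_j} \ge \deg_F(w_j) + 1$, so $w_j$ itself witnesses ampleness after losing one grain. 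For (ii), the analogous check with $w = w_i$ (using $w_i$ as witness whenever $w_i \in F$) shows that if $\xi(w_i) > \deg_W(w_i)$ then $\xi - \delta_{w_i} \in \cR_{G_W}$.

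The reduction underlying (iii) is the workhorse for the rest of the argument. By (i) the entry points form an independent set and are precisely the vertices burnable at step~$1$, so the burning algorithm applied to any configuration $\zeta$ that agrees with $\xi$ on $E$ burns all of $E$ at step~$1$, after which one is burning $\zeta|_{W'}$ in $G_{W'}$. Applied with $\zeta = \xi$ and with $\zeta = \xi - \delta_w$ for $w \in W'$, this gives that $\xi \in \cR_{G_W}$ if and only if $\xi|_{W'} \in \cR_{G_{W'}}$, and likewise for $\xi - \delta_w$. Hence minimality of $\xi$ transfers directly to minimality of $\xi|_{W'}$, proving (iii).

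For the converse, assume (i)--(iii). The case $w \in W'$ follows at once from the reduction applied to $\xi - \delta_w$ together with minimality of $\xi|_{W'}$. The main obstacle is the case $w = w_i \in E$: now $E \setminus \{w_i\}$ is still burnable at step~$1$ but $w_i$ is not, so the reduction only gives $\xi - \delta_{w_i} \in \cR_{G_W}$ if and only if $(\xi - \delta_{w_i})|_{W' \cup \{w_i\}} \in \cR_{G_{W' \cup \{w_i\}}}$. Assume the latter for contradiction and fix a valid burning order $v_1,\dots,v_n$ of $W' \cup \{w_i\}$ with $v_j = w_i$. By (i)--(ii), $(\xi - \delta_{w_i})(w_i) = \deg_{W'}(w_i) - 1$, so the step-$j$ burnability of $w_i$ forces at least one $W'$-neighbor $u = v_r$ of $w_i$ to satisfy $r < j$. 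Deleting $v_j$ from the order then yields a valid burning order of $W'$ for $\xi|_{W'} - \delta_u$ in $G_{W'}$: at each step $k \neq r$ the slack $b_{v_k w_i}$ present in the original ampleness bound exactly offsets the removal of $w_i$ from the unburnt set, while at step $r$ the slack $b_{u w_i} \ge 1$ absorbs the single lost grain at $u$. This contradicts minimality of $\xi|_{W'}$ from (iii), so $\xi - \delta_{w_i} \notin \cR_{G_W}$, completing the proof.
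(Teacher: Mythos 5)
Your proof is correct and follows essentially the same route as the paper's: (i)--(iii) are obtained by showing that a particle could otherwise be removed while preserving recurrence, and the converse for $w_i \in E(\xi,W)$ is handled, exactly as in the paper, by locating a previously burnt neighbour $u$ of $w_i$ in a hypothetical burning order and deleting $w_i$ from that order to produce a burning order for $\xi_{W\setminus E(\xi,W)} - \delta_u$, contradicting (iii). The only cosmetic differences are that you verify (i) and (ii) via the ampleness criterion \eqref{e:ample} over all subsets $F$ rather than via burning sequences, and that you add the (welcome) check that $G \setminus W'$ remains connected so that (iii) is well-posed.
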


\begin{proof}
(i) Suppose we had $w_i \sim w_j$. Then there is a burning
sequence for $\xi$ starting with $w_1, w_2$, and hence
$\xi(w_2) \ge \sum_{v \in W : v \sim w_2,\, v \not= w_1} b_{v w_2}$.
Due to minimality, we must have 
$\xi(w_2) = \sum_{ v \in W : v \sim w_2,\, v \not= w_1} b_{v w_2}$.
But this contradicts the assumption that $w_2 \in E(\xi,W)$.

(ii) Since $w_i$ is burnable in $\xi$, we must have 
$\xi(w_i) \ge \sum_{ v \in W : v \sim w_i } b_{v w_i}$. Again, due to
minimality, we must have equality here.

(iii) Burning all vertices in $E(\xi,W)$ gives a
configuration in $\cR_{W \setminus E(\xi,W)}$. This configuration
also has to be minimal, as otherwise $\xi$ would not be 
minimal.

Suppose now that $\xi \in \cR_W$ satisfies (i)--(iii). 
Due to (iii), for any $w \in W \setminus E(\xi,W)$ the configuration 
$\xi - \delta_w$ is not ample for some subset of $W \setminus E(\xi,W)$.
Let now $w \in E(\xi,W)$ and consider $\xi' = \xi - \delta_w$.
Write $E(\xi,W) = \{ w_1, \dots, w_k \}$, and assume the
indexing is such that $w = w_k$. In order to arrive at a 
contradiction assume that $\xi' \in \cR_W$. Note that $w = w_k$ 
is not burnable in $\xi'$ at the first step of the burning
algorithm, while the vertices $w_1, \dots, w_{k-1}$ 
are all still burnable in $\xi'$ at the first step. 
Hence there exists a burning sequence for $\xi'$ of the form 
\eqnst
{ v_1 = w_1, \dots, v_{k-1} = w_{k-1}, v_k, \dots, v_l = w_k, \dots, v_K }
where $W \setminus E(\xi,W) = \{ v_k, \dots, v_K \} \setminus \{ v_l \}$. 
Let $i \ge 1$ be the first index such that $v_i \sim w_k$. Note that 
necessarily $i < l$, and due to (i), $i \ge k$. Since at the time 
of burning of $v_i$ the vertex $w_k$ has not been burnt yet,
the sequence
\eqnst
{ v_k, \dots, v_i, \dots, v_{l-1}, v_{l+1}, \dots, v_K }
is a burning sequence for $\xi_{W \setminus E(\xi,W)} - \delta_{v_i}$.
This contradicts assumption (iii), and hence the proof is
complete.
\end{proof}

\begin{remark}
As a corollary, we obtain by induction the well known fact that 
all minimal configurations on $W$ have the same total number of 
particles and this equals the number of edges of $G_W$ minus the
degree of $s$ in $G_W$; see \cite[Theorem 3.5]{ML97} 
and \cite{Dhar06}.
\end{remark}

We next describe a burning procedure we can apply to 
generalized minimal configurations. Let $W \subset V$,
suppose that $\xi$ is generalized minimal on $W$, and
$\eta \in \cR_G$ such that $\eta_W = \xi$. Let 
$V_1, \dots, V_K$ be the connected components of
$G \setminus W$ not containing $s$. Let 
$W' := W \cup V_1 \cup \dots \cup V_K$. The burning
of $\eta$ is defined in several phases.

{\bf Phase 1.} By the same argument as Lemma 
\ref{lem:burn_alone} we get that $E_{V,W'}$ occurs,
so in Phase 1 we burn all of $V \setminus W'$.

{\bf Phase 2.} Burn vertices in $W$ as in 
usual burning, until the first time that a vertex
neighbouring some $V_j$ becomes burnable. 
Let $y^{(j)}_1, \dots, y^{(j)}_{r_j}$ be the 
vertices in $W$ neighbouring $V_j$ that became
burnable at this stage.

\begin{lemma}
\label{lem:burn_hole}
If $r_j \ge 1$, then after burning $y^{(j)}_1$, 
all vertices in $V_j$ can be burnt, without 
touching $W$.
\end{lemma}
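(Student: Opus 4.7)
The plan is to adapt the contradiction argument from Lemma \ref{lem:burn_alone} to the hole-setting, using generalized minimality of $\xi$. Suppose for contradiction that after burning $y^{(j)}_1$ we cannot burn all of $V_j$ without touching $W$. Set $U' := W' \setminus B \setminus \{y^{(j)}_1\}$, where $B$ is what Phase 2 burnt, and let $A \subsetneq V_j$ be a maximal subset that we can burn from $G_{U'}$ without touching $W \cap U'$. Then $V_j \setminus A \not= \es$ and no vertex of $V_j \setminus A$ is burnable in $G_{U' \setminus A}$. Since $\eta \in \cR_G$, the configuration $\eta|_{U' \setminus A}$ is still recurrent in $G_{U' \setminus A}$, so we can extend the burning by some sequence $v'_1, v'_2, \dots$ that eventually exhausts $U' \setminus A$.

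The decisive structural observation is that $V_j$ and $V_l$ (for $l \not= j$) are distinct connected components of $G \setminus W$, so they share no edges; consequently, burning vertices in $\bigcup_{l \not= j} V_l$ does not alter any threshold of a vertex in $V_j \setminus A$. Since $V_j \setminus A$ eventually gets burnt (it is contained in $U' \setminus A$), let $i$ be the smallest index with $v'_i \in V_j \setminus A$. Because the threshold of $v'_i$ must have dropped between state $U' \setminus A$ (where it was not burnable) and the moment of its burning, at least one earlier $v'_m$ with $m < i$ must lie in $W \cap U'$ and be adjacent to $v'_i \in V_j \setminus A$. Choose $m$ to be the smallest such index, so in particular $\deg_{V_j \setminus A}(v'_m) \ge 1$.

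The core of the proof is to show that $\xi' := \xi - \delta_{v'_m} \in \cR_{G_W}$, contradicting the generalized minimality of $\xi$. To this end I would construct an explicit burning sequence for $\xi'$ in $G_W$: first replay the Phase 2 order on $B$ (the thresholds agree between $G_W$ and $G_{W'}$ because no vertex of $B$ neighbours any $V_l$), then burn $y^{(j)}_1$ (its burnability in $G_{W'}$ provided a buffer of $\deg_{\bigcup V_l}(y^{(j)}_1) \ge 1$ relative to its threshold in $G_W$, more than enough for $\xi = \xi'$ at $y^{(j)}_1$), and finally burn the $W$-subsequence $w_1, w_2, \dots$ of $v'_1, v'_2, \dots$ in the order it appears. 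For each $w_q \in W \cap U'$, the burnability in $G_{U'}$ yields
\eqnst
{ \xi(w_q) \ge \deg_{W \cap U' \setminus \{w_1, \dots, w_{q-1}\}}(w_q)
  + \deg_{V_j \setminus A}(w_q) + \deg_{\bigcup_l V_l \setminus T_q}(w_q), }
where $T_q$ collects the $V_l$-vertices burnt before $w_q$ in the continuation. The last two terms are non-negative, so $\xi(w_q) = \xi'(w_q)$ already exceeds the $G_W$-threshold when $w_q \not= v'_m$. At the critical step $w_q = v'_m$ the value drops by one, but the buffer $\deg_{V_j \setminus A}(v'_m) \ge 1$ from our choice of $m$ compensates exactly, so $v'_m$ is still burnable in $G_W$ with value $\xi'(v'_m)$.

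The main obstacle is precisely the selection of $v'_m$: it must simultaneously lie in $W \cap U'$ (so that the tight bound yields the $-1$ slack for $\xi'$) and neighbour $V_j \setminus A$ (so that the $G_{U'}$-burnability gives a strict extra unit). Both requirements follow from the "no edges between $V_j$ and $V_l$" observation, which forces the continuation to burn such a $v'_m$ before the first $v'_i \in V_j \setminus A$. Once $\xi' \in \cR_{G_W}$ is established, generalized minimality of $\xi$ is violated since $v'_m \in W$, yielding the desired contradiction.
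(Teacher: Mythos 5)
Your argument follows the paper's proof closely up to the very last step: both run the contradiction by taking a maximal burnable subset of $V_j$, continuing the burning of the recurrent configuration, locating the first burnt vertex $v'_m \in W$ adjacent to the still-unburnt part of $V_j$, and exploiting the one unit of slack that this adjacency provides. The gap is in what you aim the contradiction at. You conclude that $\xi' = \xi - \delta_{v'_m} \in \cR_{G_W}$ and declare this to contradict generalized minimality. But generalized minimality (Definition \ref{defn:gen-minimal} via Definition \ref{defn:minimal}) forbids $\eta - \delta_w \in \cR_G$ for recurrent extensions $\eta$ of $\xi$; the reformulation ``$\xi$ minimal iff $\xi \in \cR_{G_W}$ and $\xi - \delta_w \notin \cR_{G_W}$'' in the Remark is only available when $G \setminus W$ is connected. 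In the generalized setting, $G_W$ identifies the holes $V_1, \dots, V_K$ with the sink, so every $W$-vertex adjacent to a hole has its hole-edges ``pre-burnt'' in $G_W$; membership in $\cR_{G_W}$ is therefore strictly weaker than what minimality controls, and $\xi - \delta_w \in \cR_{G_W}$ does not imply $\eta - \delta_w \in \cR_G$. (Only the converse implication holds, since restriction of a recurrent configuration is recurrent. This is exactly why the paper introduces the auxiliary graph $G_W^*$, identifying $V_j$ with $y^{(j)}_1$ rather than with $s$, when it needs a finite-graph reformulation of generalized minimality.)

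The fix is short and is what the paper does: direct the contradiction at $G$ itself. Your displayed inequality for $\xi(w_q)$ is precisely the burnability condition of $w_q$ in the subgraph of $G$ induced by the not-yet-burnt vertices (the term $\deg_{\bigcup_l V_l \setminus T_q}(w_q)$ accounts for unburnt hole vertices), so the full continuation $v'_1, v'_2, \dots$ --- hole vertices included, not just its $W$-subsequence --- is a burning sequence for $\eta - \delta_{v'_m}$ in $G$: every step other than $v'_m$ is unaffected by the removal of one particle, and at $v'_m$ the slack $\deg_{V_j \setminus A}(v'_m) \ge 1$ absorbs it. Hence $\eta - \delta_{v'_m} \in \cR_G$ with $v'_m \in W$, which is the contradiction the definition actually requires. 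With that substitution your proof coincides with the paper's.
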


\begin{lemma}
\label{lem:one_entry}
For each $1 \le j \le K$ we have $r_j = 0$ or $1$.
\end{lemma}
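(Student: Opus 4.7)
The plan is to argue by contradiction: suppose $r_j \ge 2$, write $y_1 := y^{(j)}_1$, $y_2 := y^{(j)}_2$, and fix a recurrent extension $\eta \in \cR_G$ of $\xi$. I will exhibit a complete burning of $\eta' := \eta - \delta_{y_2}$, thereby showing $\eta' \in \cR_G$ and contradicting generalized minimality.

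Since $\eta$ and $\eta'$ differ only at $y_2$, which is not burnt during Phase 1 or during Phase 2 before the stopping moment, the two burnings agree up to that moment, with common burnt set $B = (V \setminus W') \cup U$ for some $U \subset W$ and common unburnt set $U^\ast := (W \setminus U) \cup V_1 \cup \dots \cup V_K$. The burnability of $y_1, y_2$ in $\eta$ at that moment gives
\[ \eta(y_i) \ge d_i := \deg_{U^\ast}(y_i), \qquad i = 1, 2. \]

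Now I continue the burning of $\eta'$: next burn $y_1$, which is still burnable because $\eta'(y_1) = \eta(y_1) \ge d_1$, and then invoke Lemma \ref{lem:burn_hole} to burn all of $V_j$ without touching $W$. By construction $y_2$ is adjacent to at least one vertex of $V_j$, so burning $V_j$ (together with $y_1$) decreases the unburnt degree of $y_2$ by at least $1$. Writing $U^{\ast\ast}$ for the updated unburnt set, this yields
\[ \deg_{U^{\ast\ast}}(y_2) \le d_2 - 1 \le \eta(y_2) - 1 = \eta'(y_2), \]
so $y_2$ is burnable in $\eta'$. Burn $y_2$ and then the remaining $y^{(j)}_i$, $i \ge 3$, which are all still burnable since their counts are unchanged and unburnt degrees have only decreased.

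The burnt set is now $B^\ast = B \cup \{y^{(j)}_1, \dots, y^{(j)}_{r_j}\} \cup V_j$, and this very set also arises as an intermediate burnt set in some burning of $\eta$ itself: Phase 1, initial Phase 2, then burn all $y^{(j)}_i$ simultaneously (they are all burnable at the stopping moment in $\eta$), then apply Lemma \ref{lem:burn_hole}. Since $\eta \in \cR_G$, the burning of $\eta$ continues from $B^\ast$ and completes to burn all of $V$. Because $\eta'$ agrees with $\eta$ on $V \setminus B^\ast$ (the only vertex where they differ, $y_2$, lies in $B^\ast$), and burnability of an unburnt vertex depends only on its own count and on the set of unburnt vertices, the same continuation is valid verbatim for $\eta'$. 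Hence $\eta' \in \cR_G$, contradicting the generalized minimality of $\xi$. The only delicate point is the last one, namely the transfer of the tail of the burning from $\eta$ to $\eta'$; this succeeds thanks to the localisation of burnability to unburnt vertices combined with the equality $\eta' = \eta$ off $B^\ast$.
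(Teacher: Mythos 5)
Your proposal is correct and follows essentially the same route as the paper: burn $y^{(j)}_1$ and then all of $V_j$ via Lemma \ref{lem:burn_hole}, observe that this strictly decreases the unburnt degree of $y^{(j)}_2$ (which neighbours $V_j$), so that $\eta - \delta_{y^{(j)}_2}$ still burns completely, contradicting generalized minimality. The paper's proof is just a terser version of this; your explicit verification of the tail transfer from $\eta$ to $\eta'$ is the same mechanism the paper uses implicitly (as in the proof of Lemma \ref{lem:burn_alone}).
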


We prove these lemmas after we completed the definition
of the burning process.

{\bf Phase 3.} For each $j$ such that $r_j = 1$, burn 
$y^{(j)}_1$ and then burn all of $V_j$, appealing to
Lemmas \ref{lem:burn_hole} and \ref{lem:one_entry}.
Without loss of generality we assume that the $V_j$'s 
that were \emph{not} burnt are $V_1, \dots, V_{K_1}$ 
for some $0 \le K_1 < K$.

Following this we iterate Phases 2 and 3 for the
remaining vertices.

We use the above process to define an auxiliary graph
$G_W^*$. This is obtained from $G_{W'}$ by identifying
all vertices in $V_j$ with $y^{(j)}_1$, $1 \le j \le K$, 
and removing loop-edges. Then $\xi$ viewed as a 
configuration on $G_W^*$ is recurrent and minimal. 

Let us now prove the two lemmas.

\begin{proof}[Proof of Lemma \ref{lem:burn_hole}.]
This is similar to the proof of Lemma \ref{lem:burn_alone}.
Suppose there is a non-empty subset $U \subset V_j$
such that all of $V_j \setminus U$ can be burnt 
without touching $W$, but no further vertex
of $U$ can be burnt. Let $v_1, v_2, \dots$ be a 
continuation of the burning of $\eta_{W'}$. There 
is a first index $i$ such that $v_i$ neighbours a
vertex in $U$. Then we can apply the same sequence
to $\eta^* - \delta_{v_i}$, and see that 
this is in $\cR_G$. Since $v_i \in W$, this 
contradicts that $\xi$ was generalized minimal.
\end{proof}

\begin{proof}[Proof of Lemma \ref{lem:one_entry}.]
This is similar to the proof of 
Lemma \ref{lem:reduction}(i).
Suppose we have $r_j \ge 2$ for some $j$. Burn
$y^{(j)}_1$, and then all of $V_j$, using 
Lemma \ref{lem:burn_hole}. Since
$y^{(j)}_2$ neigbours $V_j$, this shows that 
we can decrease the number of particles at $y^{(j)}_2$,
contradicting the minimality of $\xi$.
\end{proof}

We now prove Theorem \ref{thm:minimal-finite}.

\begin{proof}[Proof of Theorem \ref{thm:minimal-finite}.]
By Lemma \ref{lem:burn_alone}, the event $\eta_W = \xi$ 
implies the event $E_{V,W}$. Using the burning rule
based on $W$, the conditional distribution of $\eta_W$ 
given the event $E_{V,W}$ is uniform on $\cR_W$. Hence
\eqn{e:min1}
{ \nu_G [ \eta : \eta_W = \xi ] 
  = \nu_G [ E_{V,W} ] (\det (\Delta_{G_W}))^{-1}. }
In order to write this in terms of the transfer 
current matrix, choose any spanning tree $t_0$ 
of $G_W$ (e.g.~the tree corresponding to $\xi$
under the burning bijection in the graph $G_W$), 
and let 
\eqnst
{ \cE
  := \{ \{ x, y \} : x \in W,\, \{ x, y \} \not\in t_0 \}. }
Then letting $t$ denote the tree corresponding to $\eta$
under the burning bijection based on $W$ in the graph $G$,
the event $\{ \cE \cap t = \es \}$ is equivalent to the event 
that $E_{V,W}$ occurs, and $\eta_W$ is a fixed
element of $\cR_W$. Hence by \eqref{e:extendedTCT},
the right hand side of \eqref{e:min1} equals
\eqnst
{ \det ( K_G(e,f) )_{e,f \in \cE}. }

Assume now that $\xi$ is generalized minimal. 
Consider the auxiliary graph $G_W^*$ constructed 
earlier. Let $t_0$ be the spanning tree assigned to 
$\xi$ under the bijection in the graph $G_W^*$. 
Let $\cE^*$ be the set of edges of $G_W^*$ not 
present in $t_0$. To each edge of $\cE^*$ 
corresponds an edge of $G$ touching $W$, let 
us call these edges $\cE$. Then the identification
of the burning processes on $G$ and $G_W^*$
shows that $\{ \eta_W = \xi \}$ is equivalent 
to $\{ \cE \cap T_G = \es \}$, where $T_G$ is
the Uniform Spannign Tree on $G$. Hence the 
statement follows with the set $\cE$.
\end{proof}

\begin{proof}[Proof of Theorem \ref{thm:minimal-infinite}.]
This is immediate from the proof of 
Theorem \ref{thm:minimal-finite},
as the event $T_{G_n} \cap \cE = \es$ is a cylinder event,
and hence its probability converges, as $n \to \infty$,
to $\mu [ T_G \cap \cE = \es ]$.
\end{proof}

\section{A general construction of sandpile measures}
\label{sec:sandpile}

In this section we prove Theorem \ref{thm:sandpile}. If $t$
is a spanning tree of $G_{V_n}$, and $x, y \in V_n$, we 
say that $x$ is a \emph{descendent} of $y$, if $y$ lies on 
the unique directed path from $x$ to $s$ (allowing 
$x = y$ as well).

\begin{proof}[Proof of Theorem \ref{thm:sandpile}.]
We need to show that for any fixed finite set $Q \subset V$
and $\xi \in \cR_Q$ the probabilities $\nu_{V_n} [ \eta_Q = \xi ]$
have a limit as $n \to \infty$. Without loss of generality 
assume that all components of $G \setminus Q$ are infinite.
Let $\eta \in \cR_{V_n}$ and let 
$W_0(\eta) \supset Q$ be the set of vertices that do not 
burn in the first phase, under the bijection based on $Q$. 

Fix $Q \subset W \subset V_n$, and assume the event $W_0(\eta) = W$.
We define an auxiliary graph $G_W^* = (W \cup \{ s \}, E_W^*)$ 
as follows. All edges of $G_{V_n}$ between vertices
$x, y \in W$ are also present in $E_W^*$. For every edge
$\{ x, y \} \in E_{V_n}$ satisfying $x \in Q$, 
$y \in (V_n \cup \{ s \}) \setminus W$ we place an edge 
between $x$ and $s$ in $E_W^*$. There are no other edges 
in $E_W^*$.

We claim that
\eqn{e:equiv}
{ W_0(\eta) = W,\, \eta_Q = \xi \qquad \text{if and only if} \qquad
  \eta \in E_{V_n,W},\, \eta_W \in \cR_{G_W^*},\, \eta_Q = \xi. }
To see this, assume first the left hand statement.
It is clear that $W_0(\eta) = W$ implies $E_{V_n,W}$, hence
we only need to prove that $\eta_W \in \cR_{G_W^*}$. 
It is clear that
\eqnst
{ \eta(x) < \deg_G(x) = \deg_{G_W^*}(x) \quad \text{for $x \in Q$.} }
Also, using that a vertex $x \in W \setminus Q$ does not burn
in phase one, we have 
\eqnst
{ \eta(x) < \deg_W(x) = \deg_{G_W^*}(x) \quad 
     \text{for $x \in W \setminus Q$.} }
Therefore, $\eta_W$ is a stable sandpile on $G_W^*$. 
Now it follows easily from the fact that $\eta_W$ burns 
in phase two that $\eta_W$ also burns in the graph
$G_W^*$ and hence $\eta_W \in \cR_{G_W^*}$.

Now assume the right hand statement in \eqref{e:equiv}.
Then we know that all vertices in $V_n \setminus W$ can 
be burnt without touching $W$, and in particular, 
without touching $Q$, so $W_0(\eta) \subset W$.
However, $\eta_W \in \cR_{G_W^*}$ implies that for
$x \in W \setminus Q$ we have $\eta(x) < \deg_W(x)$,
and hence no more vertices can be burnt in the first
phase, implying that $W_0(\eta) = W$. This proves
\eqref{e:equiv}.

The equivalence \eqref{e:equiv} hence gives the following 
decomposition:
\eqnsplst
{ \nu_{V_n} [ \eta_Q = \xi ]
  &= \sum_{W : Q \subset W \subset V_n} 
    \nu_{V_n} [ W_0(\eta) = W,\, \eta_Q = \xi ] \\
  &= \sum_{W : Q \subset W \subset V_n}\,
    \nu_{V_n} [ E_{V_n,W} ]\, \nu_{G_W^*} [ \eta_Q = \xi ]. }
It follows from the observation made after \eqref{e:EVQ}, 
that the event $E_{V_n,W}$ is spanning-tree-local, that is, 
it only depends on the status of the edges touching $W$.
Hence for fixed $W$ we have
\eqnst
{ \nu_{V_n} [ E_{V_n,W} ] \stackrel{n \to \infty}\longrightarrow 
  \text{some limit $p_W$.} }
Hence we get 
\eqnst
{ \lim_{n \to \infty} \nu_{V_n} [ \eta_Q = \xi ]
  = \sum_{\substack{W : Q \subset W \subset V\\\text{$W$ finite}}}
    p_W \nu_{G_W^*} [ \eta_Q = \xi ], }
provided we can show that for any $\eps > 0$ there exists
a finite $V_0 \subset V$ such that
\eqn{e:tail}
{ \sup_{n \ge 1} \nu_{V_n} [ W_0 \not\subset V_0 ] < \eps. } 

In order to show \eqref{e:tail}, we observe that under the
bijection based on $Q$, for every $\eta \in \cR_{V_n}$, 
$W_0(\eta)$ contains precisely those vertices that are 
descendents of some vertex of $Q$ in $t = t(\eta)$. 
Recall that due to the assumed one end property of the 
WSF on $G$, the notion of 
``descendent'' extends to the infinite case: 
$x$ is a descendent of
$y$ if and only if $y$ lies on the unique self-avoiding
path from $x$ to infinity. Let us write $\cD(Q)$ for the
set of descendents of $Q$. We have
$\mu [ |\cD(Q)| < \infty ] = 1$, hence there exists a 
finite $V'_0 \subset V$ such that 
$\mu [ \cD(Q) \not\subset V'_0 ] < \eps$.
Since $\mu_{V_n}$ converges weakly to $\mu$ and for fixed 
$W$ the event $\cD(Q) = W$ is a cylinder event,
we have $\mu_{V_n} [ \cD(Q) \not\subset V'_0 ] < \eps$
for all large enough $n$. Taking $V_0$ suitably larger 
than $V'_0$ we get $\mu_{V_n} [ \cD(Q) \not\subset V_0 ] < \eps$
for all $n \ge 1$. This proves \eqref{e:tail}, and 
completes the proof of the theorem.
\end{proof}

\end{document}